\newtheorem{theorem}{Theorem}[section]
\theoremstyle{definition}
\newtheorem{definition}[theorem]{Definition}
\newtheorem{prp}[equation]{Proposition}
\newtheorem{cor}[equation]{Corollary}
\newtheorem{clm}[equation]{Claim}
\newtheorem{conj}[equation]{Conjecture}
\theoremstyle{remark}
\newtheorem{remark}[theorem]{Remark}
\numberwithin{equation}{section}
\begin{document}

\title[Ergodicity for Falling Ball Systems]{Conditional Proof of the Ergodic Conjecture for Falling Ball Systems}


\author{Nandor Simanyi}
\address{1402 10th Avenue South \\
Birmingham AL 35294-1241}
\email{simanyi@uab.edu}


\subjclass[2010]{37D05}

\date{\today}

\begin{abstract}

  In this paper we present a conditional proof of Wojtkowski's Ergodicity Conjecture
  for the system of $1D$ perfectly elastic balls falling down in a half line under constant gravitational
  acceleration, \cite{W85}, \cite{W86}, \cite{W90a}, \cite{W90b}, \cite{W98}. Namely, we prove that almost
  every such system is (completely hyperbolic and) ergodic, by assuming the transversality between different
  singularities and between singularities and stable (unstable) invariant manifolds.
  
\end{abstract}

\maketitle

\section{Introduction/Prerequisites}

In his paper \cite{W90a} M. Wojtkowski introduced the following
Hamiltonian dynamical system with discontinuities: There is a vertical
half line $\left\{q|\, q\ge 0\right\}$ given and $n$ ($\ge 2$) point
particles with masses $m_1\ge m_2\ge \dots\ge m_n>0$ and positions
$0\le q_1\le q_2\le\dots\le q_n$ are moving on this half line so that
they are subjected to a constant gravitational acceleration $a=-1$
(they fall down), they collide elastically with each other, and the
first (lowest) particle also collides elastically with the hard floor
$q=0$. We fix the total energy

\begin{equation*}
H=\sum_{i=1}^n \left(m_iq_i+\frac{1}{2m_i} p_i^2 \right)
\end{equation*}

by taking $H=1$. The arising Hamiltonian flow with collisions
$\left(\mathbf{M}, \{\psi^t \}, \mu\right)$ ($\mu$ is the Liouville measure) is the 
studied model of this paper. 

Before formulating the result of this article, however, it is worth mentioning
here three important facts:

\begin{enumerate}

\item[$(1)$] Since the phase space $\mathbf{M}$ is compact, the Liouville measure $\mu$
is finite.

\item[$(2)$] The phase points $x\in\mathbf{M}$ for which the trajectory $\{\psi^t(x)|, t \in\mathbb{R}\}$ hits at least
one singularity (i. e. a multiple collision) are contained in a countable 
union of proper, smooth submanifolds of $\mathbf{M}$ and, therefore, such points
form a set of $\mu$ measure zero.

\item[$(3)$] For $\mu$-almost every phase point $x\in\mathbf{M}$ the collision times of
  the trajectory $\{\psi^t(x)|, t \in\mathbb{R}\}$ do not have any finite accumulation point, see
  Proposition A.1 of \cite{S96}.

\end{enumerate}

In the paper \cite{W90a} Wojtkowski formulated his main conjecture pertaining
to the dynamical system $\left(\mathbf{M}, \{\psi^t \}, \mu\right)$:

\begin{conj}[Wojtkowski's Conjecture]
  If $m_1\ge m_2\ge\dots\ge m_n>0$ and
  $m_1\ne m_n$, then all but one characteristic (Lyapunov) exponents of the
  flow $\left(\mathbf{M}, \{\psi^t \}, \mu\right)$ are nonzero. Futhermore,
  the system is ergodic.
\end{conj}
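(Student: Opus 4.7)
The plan is to follow the standard two-stage program for ergodicity of hyperbolic Hamiltonian systems with singularities: first establish complete hyperbolicity via an invariant cone field, then upgrade hyperbolicity to ergodicity by means of a local ergodic (``fundamental'') theorem of Sinai--Chernov--Liverani--Wojtkowski type, in which the hypothesized transversality plays the role usually taken by the Chernov--Sinai ansatz.

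For the hyperbolicity step, I would import the monotone symplectic cone field on $T\mathbf{M}$ constructed by Wojtkowski in \cite{W90a,W90b,W98}. Under the assumption $m_1\neq m_n$ (combined with the weak monotonicity of the masses) almost every trajectory meets at least one collision sequence at which the cone is \emph{strictly} contracted, and the standard Wojtkowski criterion then forces $2n-1$ nonzero Lyapunov exponents. The single vanishing exponent is accounted for by the flow direction on the energy surface $\{H=1\}$, since the other Hamiltonian zero exponent has already been killed by fixing the energy. This part is unconditional and relies only on facts (1)--(3).

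For ergodicity, I would set up a local ergodic theorem of the form: $\mu$-a.e.\ point has an open neighborhood contained in a single ergodic component. Pick a typical $x_0$ with a nonsingular, hyperbolic, and generic bi-infinite orbit. Local stable and unstable manifolds at $\psi^{\pm N}(x_0)$ have sizes growing exponentially in $N$, so after applying the Hopf chain argument, any small ball around $x_0$ is covered, up to a set of small measure, by stable and unstable leaves that transversely hook onto those through $x_0$. The assumed transversality between distinct singularity manifolds, and between singularities and the stable/unstable foliations, is exactly what prevents these leaves from being destroyed by near-tangencies with the singular set; the usual zig-zag Hopf argument then produces an open ergodic component around $x_0$. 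Global uniqueness of the ergodic component follows from the connectedness of $\mathbf{M}$ away from the countable union of codimension-one singularity submanifolds, as in the dispersing billiard literature.

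The main obstacle, and the reason the final statement must be conditional, is the quantitative control of the singular set along hyperbolic orbits. One must cover a neighborhood of the local singularities by a controlled (``slim'') family of codimension-one submanifolds whose tubular neighborhoods have small measure, and verify a Katok--Strelcyn-type regularity condition so that the fundamental theorem applies. The transversality hypotheses in the statement are precisely the inputs tailored to make such a slim covering available, but translating them into the exact hypotheses of the Liverani--Wojtkowski theorem, and checking that the required estimates survive the passage through multiple collisions (where the dynamics is only piecewise smooth), is the delicate technical step I expect to dominate the paper. Trajectories with unbounded collision rates, which would otherwise disrupt this analysis, are harmlessly thrown away by fact (3).
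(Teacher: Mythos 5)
Your second stage --- upgrading hyperbolicity to ergodicity through a local ergodic theorem, with the hypothesized transversality standing in for the Chernov--Sinai Ansatz and with the singular set controlled by a slim (codimension-two) covering --- is essentially the route the paper takes: it invokes the Liverani--Wojtkowski theorem \cite{L-W95}, gets proper alignment from \cite{Ch93}, and glues local ergodic components using Lemma 2.12 of \cite{K-S-Sz92}. The genuine gap is in your first stage. You assert that, under $m_1\neq m_n$ alone, almost every trajectory eventually strictly contracts Wojtkowski's cone, that this is ``standard,'' and that it is unconditional. That assertion is exactly the open content the paper must supply, and it is not known for all mass vectors with $m_1\neq m_n$: the paper proves it only for almost every mass vector (the exceptional set being a countable union of proper algebraic subsets), and this restriction necessarily propagates into the final ergodicity statement. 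The cone field is monotone but never strictly monotone at a single collision; strictness on vectors of the form $(\delta h,0)$ (Wojtkowski's property (B)) holds only if the orbit segment is \emph{sufficient}, and deciding sufficiency is a nontrivial combinatorial--algebraic question about products of the collision matrices $R_i^*$ intertwined with the floor-collision constraint $\delta h_1=0$.

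The paper's mechanism for this, absent from your proposal, is: (i) encode neutrality of a tangent vector by the homogeneous linear system $\Pi_1 R^*_{i_{k_l}}\cdots R^*_{i_1}\delta h(0)=0$, one equation per floor collision of the symbolic sequence (equation \ref{homogeneous}); (ii) prove the Combinatorial Monotonicity Property \ref{CMP}, i.e.\ inserting collisions cannot destroy strict cone invariance; (iii) exploit the Dichotomy \ref{dichotomy}: since the coefficients are rational functions of the masses, for a fixed symbolic sequence sufficiency holds either for almost every mass vector or for none; (iv) verify sufficiency in the equal-mass limit $m_1=\cdots=m_n$, where after dynamical relabelling the particles move independently and the $R_i^*$ become transpositions, so sufficiency holds as soon as every relabelled particle has hit the floor; continuity extends this to mass ratios near $1$, and combined with (ii)--(iii) it yields sufficiency of every sufficiently long symbolic sequence for almost every $\vec m$ (Theorem \ref{mainresult}). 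Without a substitute for this argument your hyperbolicity step is an assertion rather than a proof, and your claim that it holds unconditionally for every $m_1\neq m_n$ overstates even what the paper achieves.
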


\begin{remark}

1. The only exceptional exponent zero must correspond to the flow direction.

\medskip

2. The condition of nonincreasing masses (as above) is essential for 
establishing the invariance of the symplectic cone field --- an important
condition for obtaining nonzero characteristic exponents. As Wojtkowski
pointed out in Proposition 4 of \cite{W90a}, if $n=2$ and $m_1<m_2$, then there
exists a linearly stable periodic orbit, thus dimming the chances of proving
ergodicity.

\end{remark}

Our paper will very closely follow all the notations and definitions
of the atricle \cite{W90a}, so the reader is warmly recommended to be
familiar with that paper. 

\section{Eventually Strict Cone Invariance}

We will be working with the symplectic coordinates $(\delta h, \delta
v)$ for the tangent vectors of the reduced phase space $\mathbf{M}$
satisfying the usual reduction equations $\sum_{i=1}^n \delta
h_i=0=\sum_{i=1}^n \delta v_i$.

\begin{remark}
  The coordinates $\delta h_i$ and $\delta v_i$ ($i=1,2,\dots,n$)
  serve as suitable symplectic coordinates in the codimension-one
  subspace $\mathcal{T}_x$ of the full tangent space
  $\mathcal{T}_x\mathbf{M}$ of $\mathbf{M}$ at $x$. Recall that the
  $(2n-2)$-dimensional vector space $\mathcal{T}_x$ is transversal to
  the flow direction, and the restriction of the canonical symplectic
  form

  \begin{equation*}
  \omega=\sum_{i=1}^n \delta q_i \wedge \delta p_i=\sum_{i=1}^n \delta
  h_i \wedge \delta v_i
  \]
  of $\mathbf{M}$ is non-degenerate on $\mathcal{T}_x$, see \cite{W90a}. We also recall that
  \[
  \delta h_i=m_i\delta q_i+m_iv_i\delta v_i=m_i\delta q_i+v_i\delta p_i.
  \end{equation*}

\end{remark}

Corresponding to the above choice of symplectic coordinates, the
considered monotone Q-form will be

\begin{equation}
Q_1(\delta h, \delta v)=\sum_{i=1}^n \delta h_i\delta v_i.
\end{equation}

It is clear that the evolution of $DS^t(\delta h(0), \delta
v(0))=(\delta h(t), \delta v(t))$ between colisions is

\begin{equation}\label{evolution}
\frac{d}{dt}\left(\delta h(t), \delta v(t)\right)=(0, 0).
\end{equation}

If a collision of type $(i, i+1)$ ($i=1,2,\dots,n-1$) takes place at
time $t_k$, then the derivative of the flow at the collision $\delta
h^-(t_k)\mapsto \delta h^+(t_k)$, $\delta v^-(t_k)\mapsto \delta
v^+(t_k)$ is given by the matrices

\begin{equation}\label{nonfloorcollision}
\begin{aligned}
  & \delta h^+(t_k)=R^*_i\left[\delta h^-(t_k)+S_i\delta v^-(t_k)\right] \\
  & \delta v^+(t_k)=R_i\delta v^-(t_k),
\end{aligned}
\end{equation}

where the matrix $R_i$ is the $n\times n$ identity matrix, except that
its $2\times 2$ submatrix at the crossings of the $i$-th and
$(i+1)$-st rows and columns is

\begin{equation*}
  R_i^{(i,i+1)} = \left[
    \begin{array}{cc}
      \gamma_i & 1-\gamma_i \\
      1+\gamma_i & -\gamma_i
    \end{array}
\right]
\end{equation*}

with $\gamma_i=\dfrac{m_i-m_{i+1}}{m_i+m_{i+1}}$. The matrix $S_i$ is,
similarly, the $n\times n$ zero matrix, except its $2\times 2$
submatrix at the crossings of the $i$-th and $(i+1)$-st rows and
columns, which takes the form of

\begin{equation*}
S_i^{(i, i+1)}=
\left[
    \begin{array}{cc}
      \alpha_i & -\alpha_i \\
      -\alpha_i & \alpha_i
    \end{array}
\right]
\end{equation*}

with

\begin{equation}\label{alpha}
  \alpha_i=\frac{2m_im_{i+1}(m_i-m_{i+1})}{(m_i+m_{i+1})^2}(v_i^- - v_{i+1}^-)>0.
\end{equation}

These formulas can be found, for example, in Sention 4 of \cite{W90a}.

Concerning a floor collision $(0, 1)$ at time $t_k$, the
transformations are

\begin{equation}\label{floorcollision}
\begin{aligned}
  & \delta h_1^+(t_k)=\delta h_1^-(t_k) \\
  & \delta v_1^+(t_k)=\delta v_1^-(t_k) + \frac{2\delta h_1^-(t_k)}{m_1 v_1^+(t_k)},
\end{aligned}
\end{equation}

see, for instance, Section 4 of \cite{W90a} or \cite{W98}.

In this section we will be studying \emph{non-singular} trajectory segments

\begin{equation*}
  S^{[0, T]}x_0 = \left\{x_t=S^t x_0 \big|\; 0\le T\le T\right\} 
\end{equation*}

of the flow $\{S^t\}$ with the symbolic collision sequence
$\Sigma=(\sigma_1, \sigma_2,\dots ,\sigma_N)$, where $\sigma_k=(i_k,
i_{k+1})$, $0\le i_k \le n-1$, $k=1,2,\dots,N$. The collision graph
$\mathcal{G}=\mathcal{G}(\Sigma)$ of $\Sigma$ has the set $\{0, 1,
\dots, n\}$ as its vertex set, and the unoriented edges of
$\mathcal{G}$ are the unordered pairs $\{i_k, i_{k+1}\}$,
$k=1,2,\dots, N$, counted without multiplicity.

According to Wojtkowski's arguments between the Theorem and
Proposition I of Section 5 of \cite{W90a}, in order to prove the
strict invariance of the cone field $C_1=\{Q_1 \ge 0\}$ along the
considered trajectory segment

\begin{equation*}
  S^{[0, T]}x_0 = \left\{x_t=S^t x_0 \big|\; 0\le T\le T\right\},
\end{equation*}
it is enough to prove that

\begin{enumerate}
  \item[$(A)$] for every non-zero vector $(0, \delta v)\in \mathcal{T}_{x_0}$ it is true that
  $Q_1(DS^T(0, \delta v))>0$, and
  \item[$(B)$] for every non-zero vector $(\delta h, 0)\in \mathcal{T}_{x_0}$ it is true that
  $Q_1(DS^T(\delta h, 0))>0$.
\end{enumerate}

Moreover, Wojtkowski's mentioned arguments in Section 5 of \cite{W90a}
actually prove property (A) above in the case when the collision graph
$\mathcal{G}(\Sigma)$ restricted to the vertex set $\{1.2.\dots.n\}$
is connected, i. e. all collisions $(i, i+1)$ with $i>0$ occur. Here
we briefly rephrase his ideas:

Formula \ref{floorcollision} shows that a tangent vector of the form
$(0, \delta v)$ is unchanged at any floor collision. Suppose now that
a collision $(i, i+1)$ ($1\le i\le n$) occurs at time $t_k$ and
$\delta h^-(t_k)=0$. Equation \ref{nonfloorcollision} shows that,
after pushing the tangent vector $(0, \delta v^-(t_k))$ through the
collision $(i, i+1)$, either

\begin{equation}
  Q_1(\delta h^+(t_k), \delta v^+(t_k))=\alpha_i(\delta v_i^-(t_k)-\delta v_{i+1}^-(t_k))^2 > 0,
\end{equation}
or
\begin{equation}
  \delta v_i^-(t_k)=\delta v_{i+1}^-(t_k)=\delta v_i^+(t_k)=\delta v_{i+1}^+(t_k).
\end{equation}

Thus, as long as $Q_1(DS^T(0, \delta v(0)))=0$ and the restriction of
the collision graph $\mathcal{G}$ on the vertex set $\{1,2,\dots,n\}$
is connected, we have that

\begin{equation}
  \delta v_i(t)=\delta v_1(0)
\end{equation}

for all $t$, $0\le t\le T$ and all $i=1,2,\dots,n$. Therefore, $\delta
v_i(t)=0$ for all $t$, $0\le t\le T$ and all $i=1,2,\dots,n$, thanks
to the usual reduction equation $\sum_{i=1}^n \delta v_i=0$.  This
finishes the proof of Property (A) in the case when the collision
graph $\mathcal{G}(\Sigma)$ contains all edges $(i, i+1)$ with $1\le
i\le n-1$.

\begin{remark}
  From now on, we will always assume that the collision graph $\mathcal{G}$ contains all collisions
  $(i, i+1)$ with $i>0$.
\end{remark}

Checking Property (B) for the non-singular trajectory segment $S^{[0,
    T]}x_0$ is, however, a bit harder and it requires a bit more on
the side of the collision graph $\mathcal{G}(\Sigma)$ than simply
containing all edges $(i, i+1)$, $i=1,2,\dots, n-1$.

First of all, we define the linear space of all \emph{neutral vectors} as follows:

\begin{equation}\label{neutral}
  \mathcal{N}_{x_0}^T = \left\{(\delta h(0), \delta
  v(0))\in\mathcal{T}_{x_0}\big|\; Q_1(\delta h(t), \delta v(t))=0,
  \text{ for } 0\le t\le T\right\}.
\end{equation}

Recall that $(\delta h(t), \delta v(t))=DS^t\left((\delta h(0), \delta v(0))\right)$. 

It floows from the time-evolution equations \ref{evolution},
\ref{nonfloorcollision}, and \ref{floorcollision} that

\begin{equation}
  DS^t\left((\delta h(0), \delta v(0))\right)=(\delta h(t), 0)
\end{equation}

for $0\le t\le T$, i.e. $\delta v(t)=0$. Indeed, at any floor
collision the form $Q_1$ increases by the amount of

\begin{equation}
  \frac{2(\delta h_1^-(t_k))^2}{m_1v_1^+(t_k)},
\end{equation}

which forces $\delta h_1^-(t_k)=\delta h_1^+(t_k)=0$ and, as a
corollary, $\delta v_1^-(t_k)=\delta v_1^+(t_k)$.  According to
\ref{nonfloorcollision}, at an $(i, i+1)$ collision ($i\ge 1$),
occuring at time $t_k$, the form $Q_1$ increases by the amount

\begin{equation}
  \alpha_i\left(\delta v_i^-(t_k)-\delta v_{i+1}^-(t_k)\right)^2,
\end{equation}

and this forces $\delta v_i^-(t_k)=\delta v_{i+1}^-(t_k)=\delta
v_i^+(t_k)=\delta v_{i+1}^+(t_k)$, thus $\delta v(t)=0$ for $0\le t\le
T$, as claimed.

So, the neutral vector takes the form $DS^t(\delta h(0), 0)=(\delta
h(t), 0)$ for $0\le t\le T$, and $\delta h(t)$ only changes at
ball-to-ball collisions $(i, i+1)$ (at time $t_k$) according to the
law

\begin{equation}\label{Ristar}
  \delta h^+(t_k) = R_i^*\delta h^-(t_k),
\end{equation}

whereas at a floor collision, occuring at time $t_k$, the law

\begin{equation}\label{identicaltozero}
  \delta h^-(t_k)=\delta h^+(t_k)=0
\end{equation}

is forced by the neutrality requirement $Q_1(\delta h(t), \delta
v(t))=0$. Accordingly, we define $R_0=I=R_0^*$, the $n\times n$
identity matrix.

Out of the considered collisions $\sigma_1,\sigma_2,\dots,\sigma_N$,
let the floor collisions be

$$
\sigma_{k_1}, \dots,\sigma_{k_m},
$$
$1\le k_1<\dots <k_m\le N$. The
above argument shows that the neutral space $\mathcal{N}_{x_0}^T$ can
be defined by the following system of homogeneous linear equations:

\begin{equation}\label{homogeneous}
  \mathcal{N}_{x_0}^T=\mathcal{N}(\Sigma, \vec{m}) \\
  =\left\{(\delta h(0), 0)\in \mathcal{T}_{x_0}\big|\; \Pi_1 R^*_{i_{k_l}}\dots R^*_{i_2}R^*_{i_1}\delta h(0)=0, \\
  \text{ for } l=1,\dots,m\right\},
\end{equation}
where $\Pi_1(\delta h_1,\delta h_2,\dots,\delta h_n)=\delta h_1$ is
the projection onto the first component.

We observe that the neutral space $\mathcal{N}_{x_0}^T =
\mathcal{N}(\Sigma, \vec{m})$ only depends on the n-tuple os masses
$m_1>m_2>\dots>m_n>0$ and on the symbolic collision sequence
$\Sigma=(\sigma_1,\sigma_2,\dots,\sigma_N)$.

\begin{definition}\label{neutrality}
  The non-singular trajectory segment $S^{[0, T]}x_0$ or, equivalently, the corresponding pair $(\Sigma, \vec{m})$ is said to be \emph{sufficient} iff
  \begin{equation*}
    \mathcal{N}_{x_0}^T = \mathcal{N}(\Sigma, \vec{m}) = \{0\}.
    \end{equation*}
Otherwise, these objects are said to be insufficient.
\end{definition}

\begin{remark}
  A concequence of the previous definition is that the sufficiency of
  $(\Sigma, \vec{m})$ implies that $\Sigma$ must contain all $n$ types
  of collisions.  Indeed, a missing $(0, 1)$ collision would mean that
  the system of homogeneous linear equations \ref{homogeneous}
  contains no equations, whereas, a missing collision $(i_0, i_0+1)$
  ($i_0=1,2,\dots,n-1$) would establish no connection between the
  subsystems of particles $\{1,2,\dots,i_0\}$ and $\{i_0+1,
  i_0+2,\dots,n\}$, thus preventing sufficiency.
\end{remark}

Summarizing what we have seen so far, we have

\begin{prp}\label{strictconeinvariance}
  The sufficiency of the trajectory segment $S^{[0, T]}x_0$ (or,
  equivalently, the sufficiency of $(\Sigma, \vec{m})$) is equivalent
  to the \emph{strict invariance} of the cone field
  \begin{equation*}
    C_1(x_t)=\left\{(\delta h, \delta v)\in\mathcal{T}_{x_t}\big|\; \sum_{i=1}^n \delta h_i\delta v_i\ge 0\right\}
  \end{equation*}
  along
  \begin{equation*}
     S^{[0, T]}x_0=\left\{x_t=S^t x_0\big|\; 0\le t\le T\right\},
  \end{equation*}
     that is, this sufficiency exactly means that
  \begin{equation*}
  DS^T(C_1(x_0))\subset\text{int}\left(C_1(x_T)\right).
  \end{equation*}
\end{prp}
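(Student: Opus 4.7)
The plan is to derive the equivalence from what has been assembled above, namely: (i) Wojtkowski's reduction of strict invariance of $C_1$ along $S^{[0,T]}x_0$ to Properties (A) and (B); (ii) the fact that Property (A) is already known to hold thanks to the standing assumption that $\mathcal{G}$ restricted to $\{1,\dots,n\}$ is connected; and (iii) the monotonicity of $Q_1$ under $DS^t$, established by the non-negative increments $\alpha_i(\delta v_i^- - \delta v_{i+1}^-)^2$ at each ball-to-ball collision and $2(\delta h_1^-)^2/(m_1 v_1^+)$ at each floor collision, combined with the constancy between collisions from \ref{evolution}.

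For the implication sufficiency $\Rightarrow$ strict cone invariance, the only remaining task is to verify Property (B): given any non-zero $(\delta h,0)\in \mathcal{T}_{x_0}$, show $Q_1(DS^T(\delta h,0))>0$. I would argue by contrapositive. If $Q_1(DS^T(\delta h,0))=0$, then since $Q_1$ is non-decreasing along $DS^t$ and $Q_1(\delta h,0)=0$ at $t=0$, we must have $Q_1(\delta h(t),\delta v(t))=0$ for every $t\in[0,T]$. By definition \ref{neutral} this places $(\delta h,0)$ in $\mathcal{N}_{x_0}^T$, and sufficiency then forces $(\delta h,0)=0$, contradicting our choice. Properties (A) and (B) together then yield the desired inclusion $DS^T(C_1(x_0))\subset \text{int}(C_1(x_T))$.

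For the converse, I would start from a non-zero neutral vector $(\delta h(0),0)\in \mathcal{N}(\Sigma,\vec m)$, which exists precisely when $(\Sigma,\vec m)$ is insufficient. The analysis preceding the homogeneous system \ref{homogeneous} has already established that along such a vector $\delta v(t)\equiv 0$ throughout $[0,T]$, so $DS^T(\delta h(0),0)=(\delta h(T),0)$ and therefore $Q_1$ evaluates to zero at time $T$. Hence $DS^T(\delta h(0),0)$ lies on $\partial C_1(x_T)$ rather than in its interior, and strict cone invariance fails.

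The entire argument is a bookkeeping step once the monotonicity of $Q_1$ and the reduction to (A)+(B) are in place; no estimate requires any genuinely new idea. The one point requiring vigilance is that Property (A) is being silently discharged by the standing assumption on $\mathcal{G}$, so the sufficiency condition is truly responsible only for Property (B) --- and the bijection between neutral vectors and obstructions to (B) is exactly what the monotonicity-plus-initial-$Q_1$-zero argument delivers.
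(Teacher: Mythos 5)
Your proposal is correct and follows essentially the same route as the paper, which states this proposition as a summary of the preceding discussion: Wojtkowski's reduction of strict invariance to Properties (A) and (B), Property (A) from the standing connectivity assumption, and the identification of the obstructions to (B) with the neutral space $\mathcal{N}_{x_0}^T$ via the monotonicity of $Q_1$ along $DS^t$. Your contrapositive argument for (B) and the boundary argument for the converse are exactly the bookkeeping the paper leaves implicit.
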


According to the time evolution equations \ref{evolution},
\ref{nonfloorcollision}, \ref{floorcollision}, in order for the
derivative $DS^T$ of the flow to be defined on the tangent space
$\mathcal{T}_{x_0}$, it is not enough to know the pair $(\Sigma,
\vec{m})$, but one needs to know the relative velocities

\begin{equation}\label{relvel1}
\rho_k=\rho_k(\sigma_k)=v^-_{i_k}(t_k)-v^-_{i_k+1}(t_k)>0
\end{equation}

for all ball-to-ball collisions $\sigma_k=(i_k, i_k+1)$ (they play a
role in \ref{nonfloorcollision} as a part of $\alpha_{i_k}$) and the
velocities

\begin{equation}\label{relvel2}
\rho_k=\rho_k(\sigma_k)=v_1^+(t_k)
\end{equation}

for any floor collision $\sigma_k=(0, 1)$ that play a role in
\ref{floorcollision}, $k=1,2,\dots,N$.

Therefore, the \emph{strict cone invariance} formulated in
\ref{strictconeinvariance} is a property of the \emph{extended
symbolic sequence}

\begin{equation}
  \tilde{\Sigma}=\left((\sigma_1, \rho_1), (\sigma_2, \rho_2), \dots (\sigma_N, \rho_N)\right)
\end{equation}

and the vector of the masses $\vec{m}$.

The characterization of sufficiency with the strict cone invariance
\ref{strictconeinvariance} has the big advantage that it appears to be
\emph{combinatorially monotone}, i.e. if one inserts an additional
collision $(\sigma^*, \rho^*)$ into the extended symbolic sequence

\begin{equation*}
  \tilde{\Sigma}=\left((\sigma_1, \rho_1), (\sigma_2, \rho_2), \dots, (\sigma_N, \rho_N)\right)
\end{equation*}

between $(\sigma_k, \rho_k)$ and $(\sigma_{k+1}, \rho_{k+1})$, then
the sufficiency will not be lost by this insertion:

\begin{prp}[Combinatorial Monotonicity Property, CMP]\label{CMP}
  Assume that, for a given mass distribution $m_1>m_2>\dots >m_n$, the extended symbolic sequence
\begin{equation*}
  \tilde{\Sigma}=\left((\sigma_1, \rho_1), (\sigma_2, \rho_2), \dots, (\sigma_N, \rho_N)\right)
\end{equation*}

is sufficient. Let the extended symbolic sequence

\begin{equation*}
  \tilde{\Sigma}^*=\left((\sigma_1, \rho_1), \dots ,(\sigma_k, \rho_k), (\sigma^*, \rho^*), (\sigma_{k+1}, \rho_{k+1}),
  \dots ,(\sigma_N, \rho_N)\right)
\end{equation*}

be obtained from $\tilde{\Sigma}$ by the insertion of $(\sigma^*, \rho^*)$, as indicated above. Then $\tilde{\Sigma}^*$ also satifies the
strict cone invariance property formulated in \ref{strictconeinvariance}.
\end{prp}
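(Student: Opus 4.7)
The plan is to combine the characterization from Proposition~\ref{strictconeinvariance} with the $Q_1$-monotonicity of each individual collision map. The first step is to observe that the explicit gain formulas $\alpha_i(\delta v_i^- - \delta v_{i+1}^-)^2$ (ball-to-ball) and $2(\delta h_1^-)^2/(m_1 v_1^+)$ (floor) are non-negative, so every collision map $F$ satisfies $Q_1(Fv)\ge Q_1(v)$, with equality pinned down by $\delta h_1^-(v)=0$ or $\delta v_{i^*}^-(v)=\delta v_{i^*+1}^-(v)$ respectively. Each such $F$ is also symplectic and invertible, and these two facts are the only structural inputs I will use.

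Next I would decompose the new derivative as $DS^{T^*}=A\,F^*\,B$ with $B=F_k\cdots F_1$, $A=F_N\cdots F_{k+1}$, and $F^*$ the collision map for $(\sigma^*,\rho^*)$. Fix any nonzero $u\in C_1(x_0)$ and set $v=Bu$; invertibility and monotonicity of $B$ place $v$ in $C_1\setminus\{0\}$, while the sufficiency of $\tilde\Sigma$ gives $Q_1(Av)>0$. The goal reduces to $Q_1(AF^*v)>0$. If $Q_1(F^*v)>0$, monotonicity of $A$ delivers $Q_1(AF^*v)\ge Q_1(F^*v)>0$ immediately. Otherwise $Q_1(F^*v)=Q_1(v)=0$ and the $Q_1$-gain at $F^*$ vanishes. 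For a floor insertion this forces $\delta h_1(v)=0$, and \ref{floorcollision} then gives $F^*v=v$, so $Q_1(AF^*v)=Q_1(Av)>0$. For a ball-to-ball insertion $(i^*,i^*+1)$ we get $\delta v_{i^*}(v)=\delta v_{i^*+1}(v)$, whence $R_{i^*}\delta v(v)=\delta v(v)$ and $S_{i^*}\delta v(v)=0$, so $F^*v=(R_{i^*}^*\delta h(v),\,\delta v(v))$; if in addition $R_{i^*}^*\delta h(v)=\delta h(v)$, again $F^*v=v$ and we conclude.

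The hard part will be the remaining subcase: $F^*$ is ball-to-ball with $R_{i^*}^*\delta h(v)\ne\delta h(v)$, so $F^*v\ne v$ while both vectors lie on the boundary $\{Q_1=0\}$. Pointwise monotonicity is insufficient here. My plan is to use the involution identity $(R_{i^*}^*)^2=I$, which confines the perturbation $F^*v-v=((R_{i^*}^*-I)\delta h(v),\,0)$ to a specific $2$-plane inside the $(i^*,i^*+1)$-block, combined with the strict positivity $Q_1(Av)>0$ and the symplectic, invertible structure of $A$. I would argue by contradiction, assuming $Q_1(AF^*v)=0$, propagating this equality back through $A^{-1}$ so as to produce a new direction along the original trajectory whose $Q_1$ stays at $0$ throughout $\tilde\Sigma$, contradicting sufficiency. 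I expect this last step to be the principal technical obstacle, and to require a careful case analysis on the collision graph of the subsequence $(\sigma_{k+1},\ldots,\sigma_N)$ together with the explicit eigenstructure of $R_{i^*}^*$.
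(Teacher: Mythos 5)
Your reduction and the easy cases are fine, and your decomposition $DS^{T^*}=A\,F^*\,B$ with $B=F_k\cdots F_1$, $A=F_N\cdots F_{k+1}$ is exactly the paper's setup ($D_N\cdots D_{k+1}D^*D_k\cdots D_1$). But the write-up does not prove Proposition \ref{CMP}: the decisive subcase --- $Q_1(Bu)=0$, $\delta v_{i^*}(Bu)=\delta v_{i^*+1}(Bu)$, yet $R_{i^*}^*\delta h(Bu)\ne\delta h(Bu)$ --- is precisely where the whole content of the statement sits, and for it you offer only a plan. The obstruction you yourself identify is real: the sufficiency hypothesis controls $A$ only on the image cone $B\left(C_1(x_0)\right)$, whereas monotonicity of $F^*$ only places $F^*Bu$ in $C_1$, so neither the hypothesis nor pointwise monotonicity says anything about $Q_1(AF^*Bu)$ when $F^*Bu$ sits on the boundary $\{Q_1=0\}$ outside $B\left(C_1(x_0)\right)$. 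The paper disposes of exactly this point in one stroke, by asserting the inclusion $D^*D_k\cdots D_1\left(C_1(x_0)\right)\subset D_k\cdots D_1\left(C_1(x_0)\right)$ (the inserted derivative maps the image cone into itself) and then sandwiching $D_N\cdots D_{k+1}D^*D_k\cdots D_1\left(C_1(x_0)\right)\subset D_N\cdots D_1\left(C_1(x_0)\right)\subset\mathrm{int}\left(C_1(x_T)\right)$; it gives no further justification, but that inclusion is the step that does the work, and it is strictly stronger than the monotonicity $F^*(C_1)\subset C_1$ you invoke. What is missing from your argument is a proof of $F^*\left(B(C_1(x_0))\right)\subset B\left(C_1(x_0)\right)$, or some substitute for it; the involution identity $(R_{i^*}^*)^2=I$ alone does not provide this.

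Your fallback plan for the hard subcase also does not obviously close. Assuming $Q_1(AF^*Bu)=0$ and propagating monotonicity does force every $Q_1$-gain along the \emph{extended} segment to vanish, and (under the standing assumption that all ball-to-ball collision types occur) the Section 2 argument then gives $\delta v\equiv 0$; but the linear relations you inherit are the neutrality equations \ref{homogeneous} for the extended sequence, i.e.\ the products of $R^*$-matrices now contain the inserted factor $R^*_{i^*}$ in front of every floor-collision equation coming after position $k$. So what you produce is a nonzero element of $\mathcal{N}(\Sigma^*,\vec{m})$, not of $\mathcal{N}(\Sigma,\vec{m})$, and converting the former into a contradiction with the sufficiency of $\Sigma$ is, by Proposition \ref{strictconeinvariance}, precisely the proposition you are trying to prove --- the plan reduces the claim to itself unless a genuinely new ingredient (such as the image-cone invariance above, or a direct algebraic comparison of the two linear systems) is supplied. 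As it stands, the proposal correctly locates the crux but leaves it open, so it is a gap rather than a proof.
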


\begin{proof}

For any number $l$, $l=1,2,\dots,N$, let $D_l$ be the derivative of
the flow (in terms of $(\delta h, \delta v)$, as always) resulting
from the collision $(\sigma_l, \rho_l)$, and $D^*$ be the derivative
of the flow resulting from the collision $(\sigma^*, \rho^*)$.  We
have

\begin{equation*}
\begin{aligned}
  & D_N D_{N-1}\dots D_1(C_1(x_0))\subset\rm{int}(C_1(x_T)), \\
  & D^*D_k D_{k-1}\dots D_1(C_1(x_0))\subset D_k D_{k-1}\dots D_1(C_1(x_0)),
\end{aligned}
\end{equation*}

thus

\begin{equation*}
  D_N\dots D_{k+1}D^*D_k\dots D_1(C_1(x_0))\subset
  D_N D_{N-1}\dots D_1(C_1(x_0))\subset\rm{int}(C_1(x_T)).
\end{equation*}
\end{proof}

Since the coefficients of the system of homogeneous linear equations
\ref{homogeneous} are given rational functions of the masses $m_1,
m_2,\dots ,m_n$, we immediately obtain

\begin{prp}[Dichotomy]\label{dichotomy}
  For any given symbolic sequence
  $$
  \Sigma=(\sigma_1, \dots ,\sigma_N)
  $$
either

  \begin{enumerate}
  \item[$(D_1)$] for almost every n-tuple of masses $\vec{m}$
    ($m_1>m_2>\dots>m_n>0$, the exceptional set being a proper algebraic subset
    of $\mathbb{R}^n$) it is true that $\mathcal{N}(\Sigma, \vec{m})=\{0\}$, or
  \item[$(D_2)$] for every mass vector $\vec{m}$ we have $\mathcal{N}(\Sigma, \vec{m})\ne \{0\}$.
  \end{enumerate}
\end{prp}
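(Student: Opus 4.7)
The plan is to realize the condition $\mathcal{N}(\Sigma, \vec{m}) = \{0\}$ as the non-vanishing of a single polynomial in $\vec{m}$, and then invoke the standard fact that the zero set of a nonzero real polynomial is a proper algebraic subset of $\mathbb{R}^n$ (in particular, of Lebesgue measure zero).

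First, I would encode the neutral space as the kernel of a single matrix. A vector $(\delta h(0), 0) \in \mathcal{T}_{x_0}$ belongs to $\mathcal{N}(\Sigma, \vec{m})$ iff $\delta h(0) \in \mathbb{R}^n$ satisfies the reduction equation $\sum_{i=1}^n \delta h_i(0) = 0$ together with the $m$ equations $\Pi_1 R^*_{i_{k_l}} \cdots R^*_{i_1} \delta h(0) = 0$ from (\ref{homogeneous}). Assembling these constraints produces an $(m+1) \times n$ matrix $A(\vec{m})$ whose first row is $(1, 1, \ldots, 1)$ and whose $(l+1)$-st row is the first row of the product $R^*_{i_{k_l}} \cdots R^*_{i_1}$, in such a way that $\mathcal{N}(\Sigma, \vec{m})$ is naturally identified with $\ker A(\vec{m})$.

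Next, I would observe that each entry of $R_i^*$ is a rational function of $\vec{m}$ whose denominator is a power of $m_i + m_{i+1}$, and this denominator is strictly positive throughout the parameter region $\{m_1 > m_2 > \cdots > m_n > 0\}$. Therefore I can clear denominators row by row to produce a matrix $\tilde A(\vec{m})$ with polynomial entries and the same kernel as $A(\vec{m})$. Sufficiency $\mathcal{N}(\Sigma, \vec{m}) = \{0\}$ then becomes the rank condition $\mathrm{rank}\, \tilde A(\vec{m}) = n$, equivalently the non-vanishing of at least one of the $n \times n$ minors of $\tilde A$. Letting $P(\vec{m})$ be the sum of squares of these minors, $P$ is a polynomial in $m_1, \ldots, m_n$ and the pair $(\Sigma, \vec{m})$ is sufficient iff $P(\vec{m}) \neq 0$.

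The dichotomy follows at once: either $P \equiv 0$ identically as a polynomial in $\vec{m}$, in which case $\mathcal{N}(\Sigma, \vec{m}) \neq \{0\}$ for every admissible mass vector (case $(D_2)$); or $P$ is a nonzero polynomial on $\mathbb{R}^n$, whose zero locus is a proper algebraic subset, and $\mathcal{N}(\Sigma, \vec{m}) = \{0\}$ holds for all $\vec{m}$ outside this subset (case $(D_1)$). There is no real obstacle here: the whole argument is algebraic bookkeeping already anticipated by the remark preceding the statement. The only point requiring attention is to verify that clearing denominators in $A(\vec{m})$ does not alter the kernel, which is immediate from the positivity of the factors $m_i + m_{i+1}$ on the relevant domain.
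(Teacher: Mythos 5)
Your proposal is correct and follows essentially the same route as the paper, which obtains the dichotomy ``immediately'' from the observation that the coefficients of the homogeneous linear system (\ref{homogeneous}) are rational functions of the masses; your matrix $\tilde A(\vec{m})$, the clearing of the positive denominators $m_i+m_{i+1}$, and the sum-of-squares-of-minors polynomial $P$ are just the standard bookkeeping that makes this one-line observation precise. (The marginal case where $\tilde A$ has fewer than $n$ rows, so that there are no $n\times n$ minors, lands automatically in case $(D_2)$, so nothing is lost.)
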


\begin{definition}
  In the case $(D_1)$ above the symbolic sequence $\Sigma$ itself is
  said to be sufficient, otherwise it is said to be insufficient.
\end{definition}

Thanks to the characterization result \ref{strictconeinvariance} and
the Combinatorial Monotonicity Property \ref{CMP} we get

\begin{cor}\label{msigmacmp}
  For any given mass distribution $\vec{m}$, for any symbolic
  collision sequence $\Sigma$, and for any (not necessarily
  contiguous) subsequence $\Sigma_1$ of $\Sigma$, the sufficiency of
  the pair $(\Sigma_1, \vec{m})$ implies the sufficiency of $(\Sigma,
  \vec{m})$.
\end{cor}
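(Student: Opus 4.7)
The plan is to reduce the corollary to an iterated application of the Combinatorial Monotonicity Property (Proposition \ref{CMP}), using Proposition \ref{strictconeinvariance} to pass between sufficiency and strict cone invariance. The observation that makes this work is that, since $\mathcal{N}(\Sigma,\vec{m})$ depends only on the symbolic sequence and the masses (not on velocities), sufficiency of any pair $(\Sigma_0,\vec{m})$ is equivalent to strict cone invariance for \emph{some} (equivalently, every) choice of positive relative-velocity data $\rho_k>0$ consistent with $\Sigma_0$.

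First, I choose any positive-velocity extension $\tilde{\Sigma}$ of $\Sigma$, that is, any assignment of positive numbers $\rho_k$ to the collisions of $\Sigma$ via \eqref{relvel1} and \eqref{relvel2}. By restricting the $\rho_k$'s to the indices that belong to $\Sigma_1$, I obtain an extension $\tilde{\Sigma}_1$ of the subsequence $\Sigma_1$ with positive velocity data. By hypothesis the pair $(\Sigma_1,\vec{m})$ is sufficient, so Proposition \ref{strictconeinvariance} applied to $\tilde{\Sigma}_1$ yields
\begin{equation*}
  D_{\tilde{\Sigma}_1}(C_1(x_0))\subset\operatorname{int}\bigl(C_1(x_{T_1})\bigr),
\end{equation*}
where $D_{\tilde{\Sigma}_1}$ is the product of the single-collision derivatives along $\tilde{\Sigma}_1$.

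Next, I list the collisions of $\Sigma$ that are not in $\Sigma_1$ and insert them, one at a time and in the order in which they appear in $\Sigma$, into the extended sequence $\tilde{\Sigma}_1$, at each step using the velocity data already fixed in $\tilde{\Sigma}$. After each insertion, Proposition \ref{CMP} guarantees that strict cone invariance is preserved. After finitely many insertions the resulting extended sequence is exactly $\tilde{\Sigma}$, and we conclude
\begin{equation*}
  D_{\tilde{\Sigma}}(C_1(x_0))\subset\operatorname{int}\bigl(C_1(x_T)\bigr).
\end{equation*}
Applying Proposition \ref{strictconeinvariance} in the reverse direction to the extended sequence $\tilde{\Sigma}$ gives $\mathcal{N}(\Sigma,\vec{m})=\{0\}$, i.e.\ $(\Sigma,\vec{m})$ is sufficient, as required.

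The argument has no substantive obstacle once one accepts that $\mathcal{N}(\Sigma,\vec{m})$ is velocity-free; the only point that deserves attention is that the CMP is formulated for the insertion of a single collision, so one must iterate it (a trivial induction on the number of collisions in $\Sigma\setminus\Sigma_1$). The fact that $\Sigma_1$ need not be \emph{contiguous} in $\Sigma$ causes no trouble either, because each individual insertion is between two consecutive collisions of the current intermediate sequence, which is precisely the setting of Proposition \ref{CMP}.
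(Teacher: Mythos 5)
Your proposal is correct and is essentially the paper's own argument: the paper derives Corollary \ref{msigmacmp} directly from the equivalence in Proposition \ref{strictconeinvariance} together with an iterated application of the CMP \ref{CMP}, exactly as you do, and your extra remark that $\mathcal{N}(\Sigma,\vec{m})$ is independent of the velocity data only makes explicit a point the paper leaves implicit. The edge case of inserting a collision before the first or after the last entry of the intermediate sequence is not literally covered by the wording of \ref{CMP}, but the same one-line cone-inclusion argument handles it, so this is not a substantive gap.
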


Thanks to \ref{dichotomy}, we obtain the typical, i.e. the mass
distribution-free version of \ref{msigmacmp}:

\begin{cor}\label{sigmacmp}
  For any symbolic collision sequence $\Sigma$ and for any (not necessarily contiguous) subsequence $\Sigma_1$ of $\Sigma$,
  the sufficiency of $\Sigma_1$ implies the sufficiency of $\Sigma$.
\end{cor}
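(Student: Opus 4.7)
The plan is to deduce this mass-distribution-free statement from its mass-dependent analog, Corollary \ref{msigmacmp}, by passing through the Dichotomy of Proposition \ref{dichotomy}. In one sentence: pick any mass vector $\vec{m}$ for which $\Sigma_1$ is concretely sufficient, use \ref{msigmacmp} to promote this to sufficiency of $(\Sigma,\vec{m})$, and then invoke the Dichotomy to conclude that $\Sigma$ itself is typically sufficient.

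In more detail, I would assume $\Sigma_1$ is sufficient. By the Definition immediately following \ref{dichotomy}, this places $\Sigma_1$ in case $(D_1)$, so the set
\[
\mathcal{M} = \bigl\{\vec{m} :\ m_1 > m_2 > \cdots > m_n > 0,\ \mathcal{N}(\Sigma_1, \vec{m}) = \{0\}\bigr\}
\]
is the complement (within the open region of strictly decreasing mass vectors) of a proper algebraic subset of $\mathbb{R}^n$, and in particular is nonempty. Fix any $\vec{m} \in \mathcal{M}$. Then $(\Sigma_1, \vec{m})$ is sufficient, and since $\Sigma_1$ is a (not necessarily contiguous) subsequence of $\Sigma$, Corollary \ref{msigmacmp} yields sufficiency of $(\Sigma,\vec{m})$, i.e.\ $\mathcal{N}(\Sigma,\vec{m}) = \{0\}$.

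To finish, I would apply Dichotomy \ref{dichotomy} to the sequence $\Sigma$. Case $(D_2)$ would force $\mathcal{N}(\Sigma,\vec{m}) \ne \{0\}$ for \emph{every} admissible mass vector, which flatly contradicts the conclusion of the previous paragraph at any single $\vec{m} \in \mathcal{M}$. Hence $\Sigma$ must lie in case $(D_1)$, which is by definition the assertion that $\Sigma$ is sufficient.

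There is essentially no obstacle here; the argument is a formal bookkeeping exercise combining \ref{msigmacmp} with \ref{dichotomy}. The only subtlety worth highlighting is that the Dichotomy is genuinely dichotomous — either generically good or uniformly bad — so exhibiting even one mass vector on which $(\Sigma,\vec{m})$ is sufficient is enough to exclude $(D_2)$ and thereby secure $(D_1)$ for $\Sigma$.
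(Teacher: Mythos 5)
Your proof is correct and is essentially the argument the paper intends: the paper derives Corollary \ref{sigmacmp} by combining Corollary \ref{msigmacmp} with the Dichotomy \ref{dichotomy}, and your write-up simply makes explicit the key step that a single mass vector $\vec{m}$ with $\mathcal{N}(\Sigma,\vec{m})=\{0\}$ (obtained from a generic $\vec{m}$ good for $\Sigma_1$ via \ref{msigmacmp}) rules out case $(D_2)$ and forces case $(D_1)$ for $\Sigma$. No gaps; this matches the paper's (tersely stated) reasoning.
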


Next, we take a quick look at the limiting case $m_1=m_2=\dots
=m_n$. In this case, when a ball-to-ball collision $(i, i+1)$ occurs,
we can simply swap the labels $i$ and $i+1$ of the involved
particles. In this way, by performing all these dynamic label changes,
the entire flow becomes the independent motion of the $n$ particles,
each of them falling at unit acceleration and bouncing back
elastically from the floor. (Doing so, they freely pass through each
other.)

Observe that, in the case of equal masses, the transformation matrices
$R_i^*$ in \ref{Ristar} are the reflections

\begin{equation*}
  R_i=R_i^*=
  \left[
    \begin{array}{cc}
      0 & 1 \\
      1 & 0
    \end{array}
  \right],
\end{equation*}

thus the components $\delta h_i(t)$ will be independent of the time
$t$ after the dynamic re-labeling.

The above makes it clear that the trajectory segment $S^{[0, T]}x_0$
is sufficient in the sense of \ref{neutrality}, provided that all
particles hit the floor.  (After the dynamic re-labeling of the
particles, of course.) This means that, for the limiting system
$m_1=m_2=\dots =m_n$ there exists a uniform time treshold $\tau_0>0$
such that $S^{[0, T]}x_0$ is sufficient, provided that $T\ge
\tau_0$. By continuity, this property extends to every falling ball
system $m_1>m_2>\dots>m_n>0$ with $m_n/m_1\ge 1-\epsilon_0$ for some
fixed $\epsilon_0>0$. However, this property, along with the CMP
\ref{sigmacmp}, implies that there is a large enough
$K\in\mathbb{Z}_+$ such that every symbolic sequence
$\Sigma=(\sigma_1, \sigma_2,\dots,\sigma_N)$ is sufficient if $\Sigma$
contains at least $K$ consecutive, contiguous, connected
subsequences. (Recall that a subsequence is connected when its
collision graph is connected or, equivalently, the subsequence
contains all types of collisions.) However, the system of falling
balls obviously possesses the property that every trajectory $S^{[0,
    \infty )}x_0$ contains infinitely many appearances of each
  collision type. Thus we obtain the main result of this section:

\begin{theorem}[Main Result]\label{mainresult}
  For almost every n-tuple of masses $m_1>m_2>\dots>m_1>0$ (the
  exceptional set being a countable union of proper algebraic subsets
  of $\mathbb{R}^n$) the following is true: Every non-singular
  positive orbit $S^{[0, \infty )}x_0$ is sufficient, hence possesses
    the property of eventually strict cone invariance.
\end{theorem}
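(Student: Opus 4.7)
The plan is to reduce the theorem to a combinatorial claim by combining the Dichotomy (\ref{dichotomy}), the Combinatorial Monotonicity (\ref{sigmacmp}), and an analysis of the equal-mass limit that serves as a base case producing concrete sufficient symbolic sequences.

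First, I would carefully analyze the degenerate case $m_1=\cdots=m_n=1$ via the dynamic re-labeling trick: after each ball-to-ball collision, swap the labels of the colliding particles, so that in the re-labeled coordinates the flow decouples into $n$ independent copies of a point mass falling under unit gravity and bouncing elastically off the floor. In these coordinates the matrices $R_i=R_i^*$ are transpositions, so the $\delta h$-components are constant between floor collisions and merely get permuted at ball-to-ball collisions; the floor law (\ref{identicaltozero}) then forces $\delta h_j(0)=0$ for every index $j$ whose re-labeled particle hits the floor during $[0,T]$. Because $H=1$ bounds total energy and hence positions, each of the $n$ decoupled particles must return to the floor within a uniform time $\tau_0=\tau_0(n)$. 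Thus every trajectory segment of length $\tau_0$ in the equal-mass flow is sufficient.

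Second, let $\mathcal{S}$ denote the countable collection of symbolic sequences realized over equal-mass trajectory segments of length $\tau_0$. For each $\Sigma\in\mathcal{S}$, the coefficients of the homogeneous system (\ref{homogeneous}) are rational functions of $\vec{m}$, and its neutral space is trivial at $\vec{m}=(1,\dots,1)$. By Proposition \ref{dichotomy} applied along a curve approaching equal masses from within the ordered region, $\Sigma$ falls into case $(D_1)$, so the exceptional set $E_\Sigma=\{\vec{m}:\mathcal{N}(\Sigma,\vec{m})\neq\{0\}\}$ is a proper algebraic subset of $\mathbb{R}^n$. Corollary \ref{sigmacmp} then upgrades $\Sigma$-sufficiency to sufficiency of any symbolic sequence having some element of $\mathcal{S}$ as a subsequence.

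Finally, removing $\bigcup_{\Sigma\in\mathcal{S}}E_\Sigma$ yields the claimed exceptional set of masses. For any $\vec{m}$ outside this null set and any non-singular $x_0$, I would argue that the forward symbolic sequence of $S^{[0,\infty)}x_0$ contains some $\Sigma\in\mathcal{S}$ as a subsequence. This reduces to the observation that every non-singular forward orbit hosts each collision type $(0,1),(1,2),\dots,(n-1,n)$ infinitely often, which follows from the falling-ball geometry (every particle must repeatedly return to the floor under gravity) together with items (2) and (3) of the Introduction, and to the elementary combinatorial fact that an infinite word in which every letter occurs infinitely often contains every finite word over the same alphabet as a subsequence.

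The main obstacle I anticipate is the first step: establishing the uniform time threshold $\tau_0$ in the equal-mass system. The decoupled picture trivializes the dynamics, but one must rule out trajectories in which some particle has such small velocity that it lingers away from the floor for an uncontrolled amount of time; the fixed energy $H=1$ together with compactness of $\mathbf{M}$ provides the needed bound, but making the argument clean in the re-labeled coordinates, and transferring sufficiency back through the re-labeling to a statement about the original symbolic sequence, requires care. A secondary subtlety is verifying that each $E_\Sigma$ is genuinely proper inside the chamber $\{m_1>\cdots>m_n>0\}$: this follows from the equal-mass point lying in its closure together with continuity of the rational coefficients, but one must check that the determinantal vanishing locus of (\ref{homogeneous}) does not accidentally contain the entire chamber.
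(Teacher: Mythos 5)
Your proposal is correct and follows essentially the same route as the paper's own argument: the equal-mass system with dynamic re-labeling supplies sufficient segments of uniform length $\tau_0$, continuity at the equal-mass point together with the Dichotomy \ref{dichotomy} transfers their sufficiency to almost every mass vector, the Combinatorial Monotonicity Property (note that in the final step, for a fixed $\vec{m}$ outside $\bigcup_\Sigma E_\Sigma$, the mass-specific version \ref{msigmacmp} is the one you actually invoke rather than \ref{sigmacmp}) propagates sufficiency to any collision sequence containing such a segment as a subsequence, and the fact that every non-singular orbit exhibits each collision type infinitely often closes the argument. Your ``infinite word with all letters infinitely often contains every finite word as a subsequence'' step merely replaces the paper's count of $K$ consecutive connected blocks, an inessential bookkeeping difference.
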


\section{Conclusions}

Here we summarize the corollaries of our Theorem \ref{mainresult}. These
corollaries will be \emph{conditional} on the condition that the
following property is possessed by the falling ball system:

\begin{clm}[Transversality Condition]\label{transversality}
  Singularities of different order are transversal to each other.
  Analogously, stable and unstable invariant manifolds are transversal
  to all singularities.
\end{clm}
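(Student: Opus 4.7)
The Transversality Condition decomposes into two parts, which I would address separately. For the first part --- transversality of singularity manifolds of different orders --- a singularity manifold $S_\Sigma \subset \mathbf{M}$ is locally cut out by the algebraic condition that a prescribed non-generic collision configuration (e.g., simultaneous arrival of two different ball pairs, or a simultaneous ball-ball and floor collision) occurs at a prescribed forward time. The defining equations are polynomial in the symplectic coordinates $(\delta h, \delta v)$ and rational in the masses, since the Hamiltonian flow between collisions is trivial on $(\delta h, \delta v)$ (equation \ref{evolution}) and each collision acts by the rational matrices $R_i^*, S_i$ built from $\gamma_i$ and $\alpha_i$. For two distinct singularity types $\Sigma_1 \ne \Sigma_2$, my plan is to pull the normal covectors of $S_{\Sigma_1}$ and $S_{\Sigma_2}$ back to a common reference point along the orbit and show that, viewed as polynomials in $\vec m$ and the relative velocity data $\rho_k$, they are linearly independent outside a proper algebraic subset. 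This is in the same spirit as the Dichotomy \ref{dichotomy}: produce one explicit transversal configuration and conclude via Zariski density that transversality holds for almost every mass vector.

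For the second part --- transversality of stable and unstable manifolds to singularities --- I would exploit the cone-field machinery of Section 2. By Theorem \ref{mainresult}, the cone field $C_1$ is eventually strictly invariant along every non-singular positive orbit, so the tangent spaces $TW^u(x)$ and $TW^s(x)$ lie inside $\{Q_1 \ge 0\}$ and $\{Q_1 \le 0\}$ respectively, modulo the flow direction. A singularity manifold $S_\Sigma$ imposes a linear condition $\ell_\Sigma(\delta h, \delta v) = 0$ arising from the equality of two prescribed collision times, and the required transversality amounts to the non-vanishing of $\ell_\Sigma$ on some cone-interior vector tangent to $W^u(x)$ (dually for $W^s$). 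The plan is to reduce this once more to the non-vanishing of a rational function of $\vec m$ and the $\rho_k$ and invoke the same Zariski-density argument, now combined with the eventual strict inclusion $DS^T C_1(x_0) \subset \operatorname{int}(C_1(x_T))$ from Proposition \ref{strictconeinvariance}.

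The main obstacle is unambiguously the second part. The stable and unstable manifolds are \emph{a priori} only measurable objects produced by the Oseledets splitting; they lack smooth defining equations, and near the singularity set the flow itself is discontinuous, so the usual smooth-category formulation of transversality is already delicate. The analogous statement in semi-dispersing billiards is the celebrated Chernov--Sinai Ansatz, which in most concrete models has resisted proof for decades, and the falling-ball analog is precisely why this statement is assumed as a hypothesis rather than derived. A genuine proof would demand quantitative curvature control on $W^{s/u}$ as they approach singular strata, together with absolute continuity of the invariant foliations --- tools that go well beyond the linear-algebraic cone analysis carried out in Section 2.
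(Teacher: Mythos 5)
There is nothing in the paper to compare your argument against: Claim \ref{transversality} is never proved by the author. It is the standing hypothesis of the whole article --- the ``conditional'' in the title --- and Theorem \ref{maincorollary} is derived \emph{assuming} it, with the author noting only that its second half is the Chernov--Sinai Ansatz and its first half is used to make the double-singularity set slim. So any text you supply here would have to be a genuinely new proof, and what you have written is a plan rather than a proof; your own closing paragraph concedes as much for the second half, and that concession is exactly the reason the condition is assumed rather than derived.

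The concrete gaps are these. For the first part, the singularity manifolds are subsets of the phase space $\mathbf{M}$, cut out by coincidence conditions on collision times as functions of the phase point $x$; they are not linear conditions in the tangent coordinates $(\delta h,\delta v)$, and their conormal directions depend on $x$ along the orbit, not merely on the finite list of parameters $(\vec m,\rho_1,\dots,\rho_N)$. Hence the Zariski-density scheme of Proposition \ref{dichotomy} does not transfer: even granting that some determinant is a nonzero rational function of the masses, you would only exclude a null set of mass vectors for one fixed pair of singularity branches and one fixed combinatorial configuration, whereas transversality must hold at (almost) every intersection point for the fixed admissible $\vec m$ under consideration, and you have produced no explicit transversal configuration to seed the argument. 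For the second part, the cone machinery of Section 2 cannot reach the statement: the inclusion $DS^T(C_1(x_0))\subset\mathrm{int}(C_1(x_T))$ from Proposition \ref{strictconeinvariance} and Theorem \ref{mainresult} constrains the tangent planes of $W^{u/s}$ relative to the $Q_1$-cones, but the tangent spaces of the singularity manifolds are not constrained by those cones at all, so ``cone-interior vector not annihilated by $\ell_\Sigma$'' does not follow and is not even the right formulation --- the Ansatz is a measure-theoretic statement about almost every point of each singularity manifold, where the invariant manifolds exist only almost everywhere and with no a priori smooth defining equations. Closing that gap would require quantitative expansion estimates near the singular set and absolute-continuity-type arguments of the kind used in the billiard literature, none of which are provided by the linear algebra of Section 2. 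As it stands, the statement remains an open hypothesis, both in your proposal and in the paper.
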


Our goal is to prove the Ergodic Conjecture of Wojtkowski \cite{W90a}
for almost every falling ball system, assuming the above
transversality condition. The exceptional systems are the ones for
which our main result \ref{mainresult} is false, so they form a
countable union a proper algebraic sets in terms of the masses
$\vec{m}$. For this purpose we are going to use the celebrated Local
Ergodicity Theorem (LET, for short) of \cite{L-W95}, so we investigate
the first return map $T:\; U_0\to U_0$ of the billiard map to a
suitably small, open neighborhood of a hyperbolic phase point $x_0$
with at most one singularity on its trajectory.

First of all, the second part of \ref{transversality} is the so called
Chernov-Sinai Ansatz, a condition of the LET of \cite{L-W95}.

Second, the transversality of singularites guarantees that the set of
double singularities is a countable union of smooth, codimension-two
($\ge 2$) submanifolds, hence a slim set, negligible in dynamical
considerations, see Section 2 of \cite{K-S-Sz92}, esp. Lemma 2.12
there. Thus, we may safely assume that the considered hyperbolic phase
point $x_0$ has at most one singularity on its
trajectory. Hyperbolicity means, of course, that the cone field $C_1$
is strictly invariant along the trajectory of $x_0$.

Third, the second part of \ref{transversality} is Property 5' of
\cite{Ch93}, thus the result of \cite{Ch93} guarantees the Proper
Alignment of Singularities, another condition of the LET in
\cite{L-W95}.

Since the first return map $T:\; U_0\to U_0$ enjoys strict $C_1$-cone
invariance, according to Proposition 6.1 of \cite{L-W95}, the minimum
$Q_1$-expansion rate $\sigma$ of $T$ is uniformly bigger than
$1$. This establishes the Non-Contraction Property and the Strict
Unboundedness Property for the first returm map $T:\; U_0\to U_0$, two
more conditions of the LET.

Fourth, the Regularity of the Singular Set condition of \cite{L-W95}
is obtained as follows:

\begin{enumerate}
  
\item[$(1)$] The phase points with two singularities on their
  trajectory form a \emph{slim} set, as stated before.

\item[$(2)$] The smooth components of different singularities do not
  accumulate at any point of the phase space, since there are no
  infinitely many collisions in finite time (Proposition A.1 of the
  Appendix of \cite{S96}), and the horizon is finite.

\item[$(3)$] The derivatives of the singularities do not ``explode'',
  since there are no grazing (tangential) singularities in the falling
  ball system, only of corner type.

\end{enumerate}

Finally, the Local Ergodicity Theorem of \cite{L-W95} states that the
entire open neighborhood of the hyperbolic phase point $x_0$ (with at
most one singularity on its trajectory) belongs to a single ergodic
component of the falling ball system. For almost every mass vector
$\vec{m}$, the set of phase points $x_0$ not having the two properties
required above (strict cone invariance and at most one singularity
along its trajectory) is a slim set. According to Lemma 2.12 of
\cite{K-S-Sz92}, such slim sets are unable to cut the phase space into
different, open ergodic components.  Thus, we conclude:

\begin{theorem}[Main Corollary]\label{maincorollary}

  Assuming the condition \ref{transversality}, for almost every mass
  vector $m_1>m_2>\dots >m_n>0$ the falling ball system is ergodic.

\end{theorem}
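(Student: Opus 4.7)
The plan is to apply the Local Ergodicity Theorem (LET) of \cite{L-W95} to the first return map of the billiard at typical hyperbolic phase points, and then to glue the resulting open ergodic neighborhoods by means of Lemma~2.12 of \cite{K-S-Sz92}. The starting input is Theorem \ref{mainresult}: for almost every mass vector $\vec m$, every non-singular positive orbit possesses eventually strict $C_1$-cone invariance, which supplies hyperbolicity at the level of individual typical trajectories.

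First I would fix a mass vector $\vec m$ outside the countable union of exceptional algebraic sets produced by Theorem \ref{mainresult}, and pick a phase point $x_0$ that is both hyperbolic (strict invariance of $C_1$ along its orbit, in the sense of Proposition \ref{strictconeinvariance}) and carries at most one singularity on its full trajectory. The first part of Claim \ref{transversality} (transversality of singularities of different orders) implies that phase points whose trajectories hit at least two singularities form a countable union of submanifolds of codimension at least two, i.e.\ a slim set in the sense of \cite{K-S-Sz92}; hence $\mu$-a.e.\ hyperbolic point also satisfies this extra assumption.

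Next I would verify, one by one, the four standard hypotheses of the LET for the first return map $T\colon U_0\to U_0$ to a small open neighborhood $U_0$ of $x_0$. The Chernov--Sinai Ansatz is exactly the second half of Claim \ref{transversality}. The Proper Alignment of Singularities follows from the transversality of singularities, which is Property~5' of \cite{Ch93}, together with the main result of that paper. The Non-Contraction and Strict Unboundedness properties follow from the strict invariance of $C_1$ along the orbit of $x_0$ combined with Proposition~6.1 of \cite{L-W95}, which yields a minimum $Q_1$-expansion rate uniformly bigger than $1$. Finally, the Regularity of the Singular Set is obtained by combining (i) the slimness of the set of phase points with two singularities on their trajectory, (ii) Proposition~A.1 of \cite{S96} together with the compactness of $\mathbf M$ (finite horizon), which jointly rule out accumulation of smooth singularity components, and (iii) the absence of grazing (tangential) collisions in the falling ball system, so that the singularities are only of corner type and their derivatives do not explode.

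Once these four conditions are in place, the LET delivers an open neighborhood of $x_0$ contained in a single ergodic component. To upgrade this local statement to global ergodicity, I would observe that the complement of the ``good'' set of phase points (hyperbolic plus at most one singularity on the trajectory) is a slim subset of $\mathbf M$, and invoke Lemma~2.12 of \cite{K-S-Sz92}: a slim set cannot separate the phase space into distinct open ergodic components, so the entire $\mathbf M$ is a single ergodic component. The main obstacle, as I see it, is the careful verification of the Regularity of the Singular Set --- in particular item~(ii) above --- since that is precisely where the combinatorial/cone-invariance material developed in this paper has to interface with the analytic hypotheses of \cite{L-W95}; the remaining conditions then reduce almost directly to ingredients already assembled in Sections~1 and~2.
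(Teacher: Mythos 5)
Your proposal is correct and follows essentially the same route as the paper: Theorem \ref{mainresult} supplies hyperbolicity for almost every mass vector, the four hypotheses of the Local Ergodicity Theorem of \cite{L-W95} are verified exactly as in Section 3 (Chernov--Sinai Ansatz and Proper Alignment from Claim \ref{transversality} together with \cite{Ch93}, Non-Contraction and Strict Unboundedness from Proposition 6.1 of \cite{L-W95}, Regularity of the Singular Set from slimness, Proposition A.1 of \cite{S96} with finite horizon, and the absence of grazing singularities), and the local ergodic neighborhoods are glued via Lemma 2.12 of \cite{K-S-Sz92}. The only cosmetic discrepancy is that the paper derives the Proper Alignment condition from the \emph{second} part of Claim \ref{transversality} (Property 5' of \cite{Ch93}), whereas you phrase it in terms of transversality of singularities, but this does not change the argument.
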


\bibliographystyle{amsplain}

\end{document}